\tikzset{every loop/.style={min distance=10mm,looseness=10}}
\theoremstyle{plain}
\newtheorem{theorem}{Theorem}[section]
\newtheorem{proposition}[theorem]{Proposition}
\newtheorem{corollary}[theorem]{Corollary}
\newtheorem{lemma}[theorem]{Lemma}
\def\th@remark{%
  \thm@headfont{\bfseries}%
  \normalfont 
  \thm@preskip\topsep \divide\thm@preskip\tw@
  \thm@postskip\thm@preskip
}
\theoremstyle{remark}
\theoremstyle{definition}
\def\Z{\mathbb{Z}}
\def\N{\mathbb{N}}
\begin{document} 
\title[On abelian  $\ell$-towers of multigraphs II]{On abelian  $\ell$-towers of multigraphs II}

\author{Kevin McGown, Daniel Valli\`{e}res}

\address{Mathematics and Statistics Department, California State University, Chico, CA 95929 USA} 
\email{kmcgown@csuchico.edu} 
\address{Mathematics and Statistics Department, California State University, Chico, CA 95929 USA}
\email{dvallieres@csuchico.edu}

\subjclass[2010]{Primary: 05C50; Secondary: 11A07, 33C45}
\date{\today}

\begin{abstract}
Let $\ell$ be a rational prime.  Previously, abelian $\ell$-towers of multigraphs were introduced which are analogous to $\Z_{\ell}$-extensions of number fields.  It was shown that for a certain class of towers of bouquets, the growth of the $\ell$-part of the number of spanning trees behaves in a predictable manner (analogous to a well-known theorem of Iwasawa for $\Z_{\ell}$-extensions of number fields).  In this paper, we give a generalization to a broader class of regular abelian $\ell$-towers of bouquets than was originally considered.  To carry this out, we observe that certain shifted Chebyshev polynomials are members of a continuously parametrized family of power series with coefficients in $\Z_{\ell}$ and then study the special value at $s=1$ of the Artin-Ihara $L$-function $\ell$-adically.
\end{abstract} 
\maketitle 
\tableofcontents 

\section{Introduction}

In \cite{Vallieres:2021}, abelian $\ell$-towers of multigraphs were introduced
which can be viewed as being analogous to $\mathbb{Z}_{\ell}$-extensions of number fields, where $\ell$ is a rational prime.
To every tuple in $\Z_\ell^t$ (with $t\in\N$ and not all entries divisible by $\ell$) one can associate an abelian $\ell$-tower of a bouquet with $t$ loops.  Furthermore, it was proved that when the tuple belongs to $\Z^t$, the $\ell$-adic valuation of the number of spanning trees behaves similarly to the $\ell$-adic valuation of the class numbers in $\mathbb{Z}_{\ell}$-extensions of number fields, as in a well-known theorem of Iwasawa (see Theorem $11$ in \cite{Iwasawa:1959} and also \S$4.2$ of \cite{Iwasawa:1973}).
More specifically, if $\kappa_n$ denotes the number of spanning trees of the multigraph at the $n$-th level, then there exist non-negative integers $\mu, \lambda, n_{0}$ and an integer $\nu$ such that
$${\rm ord}_{\ell}(\kappa_n)=\mu\ell^n+\lambda n + \nu$$ for $n\geq n_0$. 
In the present paper, we extend this result to all of $\Z_\ell^t$,
thereby generalizing Theorem $5.6$ of \cite{Vallieres:2021} to a broader class of regular abelian $\ell$-towers of bouquets than was originally considered.

The paper is organized as follows.
In~\S\ref{shifted},
we observe that the coefficients of the shifted Chebyshev polynomials $P_a(T)$  employed in~\cite{Vallieres:2021} satisfy some congruences, and in~\S\ref{limits} we use this to show that there exists a continuous function
$$f:\mathbb{Z}_{\ell} \longrightarrow \mathbb{Z}_{\ell}\llbracket T \rrbracket, $$
satisfying $f(a)=P_a(T)$ for $a\in\N$.  This allows us, in~\S\ref{application}, to study $\ell$-adically the special value at $s=1$ of Artin-Ihara $L$-functions associated to the covers arising in abelian $\ell$-towers of a bouquet, which leads to our main result (see Theorem \ref{maintheorem}).  In \S \ref{examples}, we end the paper with a few examples.

\subsection*{Acknowledgement}
We would like to thank Thomas Mattman and John Lind for stimulating discussions and helpful suggestions.
\section{Shifted Chebyshev polynomials}\label{shifted}
Throughout this paper, $\mathbb{N}$ will denote the set of positive integers.  Recall that for $a \in \mathbb{Z}_{\ge 0}$ the Chebyshev polynomial (of the first type) $T_{a}(X)  \in \mathbb{Z}[X]$ is the unique polynomial satisfying
$$T_{a}(\cos(\theta)) = \cos(a \theta) $$
for all $\theta \in \mathbb{R}$.

In \S$5.4$ of \cite{Vallieres:2021}, some polynomials $P_{a}(X) \in \mathbb{Z}[X]$ were defined recursively as follows.  For $a = 0,1$, one sets $P_{0}(X) = 0$, $P_{1}(X) = X$, and for $a \ge 2$
$$P_{a}(X) = X(a^{2} - (a-1)P_{1}(X) -(a-2)P_{2}(X) - \ldots - P_{a-1}(X)).$$
Given $m \in \mathbb{N}$, we let $\zeta_{m} = \exp\left( 2 \pi i/m\right)$.  Furthermore, if $a \in \mathbb{Z}$, we set
$$\varepsilon_{m}(a) = (1 - \zeta_{m}^{a})(1 - \zeta_{m}^{-a}) \in \overline{\mathbb{Q}} \subseteq \mathbb{C}, $$
and we write $\varepsilon_{m}$ rather than $\varepsilon_{m}(1)$.
The polynomials $P_{a}(X)$ satisfy various properties, but most notably Lemma $5.5$ of \cite{Vallieres:2021} shows that for $m \in \mathbb{N}$ and $a \in \mathbb{Z}_{\ge 0}$, one has
\begin{equation} \label{identity}
P_{a}(\varepsilon_{m}) = \varepsilon_{m}(a).
\end{equation}
Consider now the polynomial $Q_{a}(X) =  2 - 2  \cdot T_{a} \left(1 - \frac{X}{2} \right)$.  Since
$$\varepsilon_{m}(a) = 2 - 2 \cos\left( \frac{2 \pi a}{m}\right), $$
we have $P_{a}(\varepsilon_{m}) = Q_{a}(\varepsilon_{m}) = \varepsilon_{m}(a)$ for all $m \in \mathbb{N}$.  Since $P_{a}(X)$ and $Q_{a}(X)$ agree at infinitely many points, they are equal.  Thus, the precise relationship between the polynomials $P_{a}(X)$ and the Chebyshev polynomials is
\begin{equation} \label{p_to_cheby}
P_{a}(X) =  2 - 2  \cdot T_{a} \left(1 - \frac{X}{2} \right),
\end{equation}
so that $P_{a}(X)$ is a shifted Chebyshev polynomial.  The polynomials $P_{a}(X)$ have no constant coefficients and have degree $a$.  From now on, we write
$$P_{a}(X) = d_{1}(a)X + d_{2}(a)X^{2} + \ldots + d_{a}(a)X^{a}. $$
In order to simplify the notation, we shall also make use of the falling and raising factorials
$$(X)_{n} = X(X-1) \ldots  (X - n +1) \text{ and } X^{(n)} = X(X+1) \ldots (X+n -1).$$
\begin{proposition} \label{exactfor}
For $k,n \in \mathbb{N}$, we have
\begin{equation*}
d_{k}(n) =
\begin{cases}
(-1)^{k-1} \binom{n+k-1}{2k-1} \frac{n}{k}, &\text{ if } k \le n;\\
0, &\text{ otherwise}.
\end{cases}
\end{equation*}
\end{proposition}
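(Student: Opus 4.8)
The plan is to route everything through generating functions, starting from the identification $P_a(X) = 2 - 2T_a(1 - X/2)$ recorded in~\eqref{p_to_cheby}. Recall the classical generating function $\sum_{a \ge 0} T_a(x)\,t^a = (1 - tx)/(1 - 2tx + t^2)$. Subtracting this (scaled by $2$) from $\sum_{a\ge 0} 2\,t^a = 2/(1-t)$, putting everything over the common denominator $(1-t)(1-2tx+t^2)$, and simplifying the numerator --- which collapses to $2t(1-x)(1+t)$ --- one obtains, upon substituting $x = 1 - X/2$ (so that $1 - x = X/2$ and $1 - 2tx + t^2 = (1-t)^2 + tX$),
\[
\sum_{a \ge 0} P_a(X)\,t^a = \frac{tX(1+t)}{(1-t)\big((1-t)^2 + tX\big)}.
\]

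Next I would read off the coefficient of $X^k$. Writing the right-hand side as $\dfrac{tX(1+t)}{(1-t)^3}\cdot\dfrac{1}{1 + tX/(1-t)^2}$ and expanding the last factor as a geometric series in $X$ gives
\[
\sum_{a \ge 0} d_k(a)\,t^a = (-1)^{k-1}\,\frac{t^k(1+t)}{(1-t)^{2k+1}}.
\]
Since the right-hand side is divisible by $t^k$, this already yields $d_k(a) = 0$ whenever $a < k$, which is the second case of the proposition. For $a \ge k$, I would use $1/(1-t)^{2k+1} = \sum_{r \ge 0}\binom{2k+r}{2k}t^r$ and extract the coefficient of $t^a$, obtaining
\[
d_k(a) = (-1)^{k-1}\left[\binom{a+k}{2k} + \binom{a+k-1}{2k}\right].
\]

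It remains to match this with the claimed closed form. Applying Pascal's rule $\binom{a+k}{2k} = \binom{a+k-1}{2k} + \binom{a+k-1}{2k-1}$ and the elementary identity $\binom{a+k-1}{2k} = \tfrac{a-k}{2k}\binom{a+k-1}{2k-1}$, the bracket becomes $\binom{a+k-1}{2k-1}\big(1 + \tfrac{a-k}{k}\big) = \binom{a+k-1}{2k-1}\tfrac{a}{k}$, which gives the stated formula for $d_k(a)$ and completes the argument. (Alternatively, one could induct on $a$ using the three-term recurrence $P_{a+1}(X) = (2-X)P_a(X) - P_{a-1}(X) + 2X$ inherited from the Chebyshev recurrence, but that forces one to track several boundary cases in $k$.) I do not expect a serious obstacle here; the only delicate points are the bookkeeping in the two coefficient extractions --- in particular applying $\binom{2k+r}{2k}$ with the correct shifts $r = a-k$ and $r = a-k-1$ --- and the boundary case $k = a$, where the term $\binom{a+k-1}{2k} = \binom{2k-1}{2k}$ vanishes as it should.
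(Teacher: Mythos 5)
Your argument is correct, and I checked the algebra: the numerator does collapse to $2t(1-x)(1+t)$, the coefficient extraction gives $d_k(a)=(-1)^{k-1}\bigl[\binom{a+k}{2k}+\binom{a+k-1}{2k}\bigr]$ for $a\ge k$, and the Pascal-plus-ratio manipulation correctly reduces the bracket to $\binom{a+k-1}{2k-1}\frac{a}{k}$. However, your route is genuinely different from the paper's. The paper quotes the hypergeometric representation $T_n(X)={}_2F_1\left(-n,n;\frac{1}{2};\frac{1-X}{2}\right)$ from Szeg\H{o} and simply rewrites that finite sum as $1+n\sum_{i=1}^{n}(-2)^i\frac{(n+i-1)!}{(n-i)!(2i)!}(1-X)^i$, after which the formula for $d_k(n)$ drops out of~\eqref{p_to_cheby} by reading off coefficients; this is shorter but leans entirely on an imported classical identity. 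Your proof instead derives the bivariate generating function $\sum_{a\ge 0}P_a(X)t^a=\frac{tX(1+t)}{(1-t)\left((1-t)^2+tX\right)}$ from the standard Chebyshev generating function and extracts coefficients twice. What this buys you is self-containedness (only the elementary generating function $\frac{1-tx}{1-2tx+t^2}$ is needed) and the fact that the vanishing $d_k(a)=0$ for $a<k$ falls out immediately from the factor $t^k$, rather than being read off from the finite range of summation as in the paper. The cost is the extra binomial bookkeeping at the end, which you handle correctly, including the boundary case $a=k$.
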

\begin{proof}
Our starting point (see \S $4.21$ of \cite{Szego:1975}) is the equality between Chebyshev polynomials and hypergeometric functions which gives
\begin{equation*}
\begin{aligned}
T_{n}(X) &= {}_{2}F_{1}\left( -n,n;\frac{1}{2},\frac{1-X}{2}\right)\\
&= \sum_{i=0}^{\infty} \frac{(-n)^{(i)}n^{(i)}}{\left( \frac{1}{2}\right)^{(i)}} \left( \frac{1}{i!}\right)\left( \frac{1-X}{2}\right)^{i}\\
&= 1 + n \sum_{i=1}^{n}(-2)^{i}\frac{(n + i -1)!}{(n-i)!(2i)!}(1-X)^{i}.
\end{aligned}
\end{equation*}
Combining with (\ref{p_to_cheby}) gives the desired result.
\end{proof}
In particular, we have
\begin{equation} \label{lead_coef}
d_{1}(n) = n^{2}  \text{ and } d_{n}(n) = (-1)^{n-1}. 
\end{equation}
The integers $d_{k}(n)$ satisfy certain congruence relations which we state in Proposition \ref{congruence} below, but we first need a lemma.
\begin{lemma} \label{ineq}
Let $\ell$ be a rational prime satisfying $\ell \ge 3$.  Then for all $t \in \mathbb{Z}_{\ge 0}$, we have
$${\rm ord}_{\ell}((2t + 1)! (t+1)) \le t $$
and
$${\rm ord}_{2}((2t + 1)! (t+1)) \le 2t. $$
\end{lemma}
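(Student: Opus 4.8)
The plan is to estimate the $\ell$-adic valuation of $(2t+1)!$ using Legendre's formula and then add the small contribution of the factor $t+1$. Recall that $\mathrm{ord}_\ell(N!) = \sum_{j \ge 1} \lfloor N/\ell^j \rfloor < \sum_{j\ge 1} N/\ell^j = N/(\ell-1)$, and more precisely $\mathrm{ord}_\ell(N!) = (N - s_\ell(N))/(\ell - 1)$ where $s_\ell(N)$ is the sum of the base-$\ell$ digits of $N$. So for $\ell \ge 3$ we get $\mathrm{ord}_\ell((2t+1)!) \le (2t+1)/(\ell-1) - 1/(\ell-1) \cdot 0$; the cleanest bound is $\mathrm{ord}_\ell((2t+1)!) \le (2t+1-1)/(\ell-1) = 2t/(\ell-1) \le t$ since $\ell - 1 \ge 2$, using $s_\ell(2t+1) \ge 1$.

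First I would dispose of the factor $t+1$: we need $\mathrm{ord}_\ell((2t+1)!) + \mathrm{ord}_\ell(t+1) \le t$. Since $\mathrm{ord}_\ell(t+1) \le \log_\ell(t+1)$, and for $\ell \ge 3$ the factorial bound already gives roughly $2t/(\ell-1) \le 2t/2 = t$ with room to spare when $\ell \ge 5$, the only genuinely tight case is $\ell = 3$. For $\ell = 3$ one has $\mathrm{ord}_3((2t+1)!) = (2t+1 - s_3(2t+1))/2$ and $\mathrm{ord}_3(t+1) = (t+1 - s_3(t+1) - (\text{something}))$... more simply, $\mathrm{ord}_3(t+1)!  - \mathrm{ord}_3 t!$ telescopes, but the direct route is: $\mathrm{ord}_3((2t+1)!(t+1)) = \mathrm{ord}_3((2t+1)!) + \mathrm{ord}_3(t+1)$, and I would bound $\mathrm{ord}_3(t+1) \le \mathrm{ord}_3((2t+2)!) - \mathrm{ord}_3((2t+1)!) + \cdots$ — actually the slick argument is to note $(2t+1)!(t+1) \mid (2t+2)!$, hence $\mathrm{ord}_\ell((2t+1)!(t+1)) \le \mathrm{ord}_\ell((2t+2)!) \le (2t+2 - 1)/(\ell-1) = (2t+1)/(\ell-1)$, and for $\ell \ge 3$ this is $\le (2t+1)/2 < t+1$, i.e.\ $\le t$ since the quantity is an integer. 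That handles the odd primes uniformly.

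For the prime $2$, the same divisibility $(2t+1)!(t+1) \mid (2t+2)!$ gives $\mathrm{ord}_2((2t+1)!(t+1)) \le \mathrm{ord}_2((2t+2)!) = 2t+2 - s_2(2t+2) \le 2t+2 - 1 = 2t+1$. To get $\le 2t$ I would observe that $s_2(2t+2) = s_2(t+1) \ge 1$ always, and in fact one needs $s_2(2t+2) \ge 2$ except when $t+1$ is a power of $2$; in the exceptional case $t + 1 = 2^r$ a direct check (or a slightly sharper count noting that $(2t+1)!$ itself loses a digit) closes the gap. Alternatively, write $\mathrm{ord}_2((2t+1)!(t+1)) = \mathrm{ord}_2((2t+1)!) + \mathrm{ord}_2(t+1) = (2t+1 - s_2(2t+1)) + \mathrm{ord}_2(t+1) = 2t + 1 - s_2(2t+1) + \mathrm{ord}_2(t+1)$, and since $2t+1$ is odd we have $s_2(2t+1) \ge 1$; it remains to see $s_2(2t+1) \ge 1 + \mathrm{ord}_2(t+1)$, which follows because $2t+1 = 2(t+1) - 1$, so the binary expansion of $2t+1$ ends in exactly $(1 + \mathrm{ord}_2(t+1))$ ones (subtracting $1$ from $2(t+1)$, whose binary form ends in $1 + \mathrm{ord}_2(t+1)$ zeros, produces that many trailing ones), giving $s_2(2t+1) \ge 1 + \mathrm{ord}_2(t+1)$ as needed.

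The main obstacle is the borderline tightness at the smallest primes: for $\ell = 3$ and $\ell = 2$ the crude inequality $\mathrm{ord}_\ell(N!) < N/(\ell-1)$ is barely strong enough, so the argument must exploit either the digit-sum correction term $s_\ell(N) \ge 1$ or the integrality of the valuation to shave off the final unit. Once the reduction to $(2t+2)!$ (or the explicit trailing-digit count) is in place, everything else is a routine application of Legendre's formula, and I would present the base-$2$ case via the trailing-ones observation since it is the most delicate.
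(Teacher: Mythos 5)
Your argument is correct, but it is genuinely different from the one in the paper. The paper attacks ${\rm ord}_{\ell}((2t+1)!(t+1))$ head-on with Legendre's formula in its sum-of-floors form, splitting the sum at $v={\rm ord}_{\ell}(t+1)$, evaluating the floors exactly for $k\le v$ (where $\ell^{k}\mid t+1$), and bounding the tail by a geometric series; the final inequalities $<t+1$ (for $\ell\ge 3$) and $<2t+1$ (for $\ell=2$) are then sharpened to $\le t$ and $\le 2t$ by integrality, just as you do. You instead use the digit-sum form ${\rm ord}_{\ell}(N!)=(N-s_{\ell}(N))/(\ell-1)$ together with two clean observations: for odd $\ell$ the divisibility $(2t+1)!(t+1)\mid(2t+2)!$ absorbs the awkward factor $t+1$ into a single factorial, and for $\ell=2$ the identity $2t+1=2(t+1)-1$ forces $s_{2}(2t+1)\ge 1+{\rm ord}_{2}(t+1)$ via the trailing-ones count, which is exactly the inequality needed. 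Your route is shorter and more transparent about where the slack comes from (the digit sum), at the cost of invoking the digit-sum version of Legendre's formula; the paper's route is more elementary but requires the careful bookkeeping of the split sum. One further simplification available to you at $\ell=2$: since $(2t+2)!=2(t+1)(2t+1)!$ exactly, you have ${\rm ord}_{2}((2t+1)!(t+1))={\rm ord}_{2}((2t+2)!)-1=2t+1-s_{2}(2t+2)=2t+1-s_{2}(t+1)\le 2t$, which disposes of the case without the trailing-ones discussion and also repairs the slightly hand-wavy ``exceptional case'' aside in your first pass (which, as written, was the only soft spot, though your alternative argument already renders it moot).
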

\begin{proof}
Let $\ell$ be an arbitrary prime number.  The claim is true if $t = 0$ so we can assume that $t \ge 1$.  To simplify the notation, let $v = {\rm ord}_{\ell}(t+1)$ and $N = \lfloor \log_{\ell}(2t + 1) \rfloor$.   Note first that
$$v \le \lfloor \log_{\ell}(t+1)\rfloor \le N. $$
Using Legendre's formula, we have
$${\rm ord}_{\ell}((2t + 1)!(t+1)) = \sum_{k=1}^{v}\left\lfloor \frac{2t+1}{\ell^{k}} \right\rfloor + \sum_{k = v+1}^{N} \left\lfloor \frac{2t+1}{\ell^{k}}\right\rfloor + v, $$
where the first sum is interpreted as being zero when $v=0$ and the second sum is interpreted as being zero when $v = N$.  Now, we have
\begin{equation*}
\begin{aligned}
{\rm ord}_{\ell}((2t + 1)!(t+1)) &\le  \sum_{k=1}^{v}\left\lfloor \frac{2t+1}{\ell^{k}} \right\rfloor + \sum_{k = v+1}^{N}  \frac{2t+1}{\ell^{k}} + v \\
&=  \sum_{k=1}^{v}\left\lfloor \frac{2(t+1)}{\ell^{k}} - \frac{1}{\ell^{k}} \right\rfloor + \sum_{k = v+1}^{N}  \frac{2t+1}{\ell^{k}} + v \\
&=  \sum_{k=1}^{v}\frac{2(t+1)}{\ell^{k}} + \sum_{k=1}^{v}\left\lfloor - \frac{1}{\ell^{k}} \right\rfloor + \frac{2t + 1}{(\ell - 1)\cdot  \ell^{v}} \left(1-\frac{1}{\ell^{N - v}} \right) + v \\
&= \sum_{k=1}^{v}\frac{2(t+1)}{\ell^{k}}  + \frac{2t + 1}{(\ell - 1)\cdot  \ell^{v}} \left(1-\frac{1}{\ell^{N - v}} \right) \\
&= \frac{2(t+1)}{\ell - 1}\left(1 - \frac{1}{\ell^{v}}\right) + \frac{2t + 1}{(\ell - 1)\cdot  \ell^{v}} \left(1-\frac{1}{\ell^{N - v}} \right).
\end{aligned}
\end{equation*}
If $\ell \ge 3$, then we continue as follows
\begin{equation*}
\begin{aligned}
&\le (t+1)\left(1 - \frac{1}{\ell^{v}}\right) + \left( t + \frac{1}{2}\right)\frac{1}{\ell^{v}}  \\
&= (t+1) - \frac{1}{2 \cdot \ell^{v}} \\
&< t+1,
\end{aligned}
\end{equation*}
whereas if $\ell = 2$, we rather continue as follows
\begin{equation*}
\begin{aligned}
&= 2(t+1)\left(1 - \frac{1}{2^{v}} \right) + (2t +1)\left(\frac{1}{2^{v}} - \frac{1}{2^{N}} \right) \\
&= 2t + 2 - \frac{1}{2^{v}} - \frac{2t + 1}{2^{N}} \\
&< 2t + 1,
\end{aligned}
\end{equation*}
since
$$\frac{2t+1}{2^{N}} \ge 1. $$
This ends the proof.
\end{proof}

\begin{proposition} \label{congruence}
Let $\ell$ be any rational prime and let $m, n \in \mathbb{N}$ be such that $$m \equiv n \pmod{\ell^{s}}$$ for some $s \in \mathbb{N}$.  Then, we have
$$d_{t+1}(m) \equiv d_{t+1}(n) \pmod{\ell^{s-t}}, $$
for $t = 0,1,\ldots, s-1$.  
\end{proposition}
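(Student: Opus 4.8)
The plan is to reduce the whole statement to a single polynomial divisibility. By Proposition~\ref{exactfor}, for every $n \in \mathbb{N}$ one has $d_{t+1}(n) = (-1)^t \binom{n+t}{2t+1}\frac{n}{t+1}$; indeed $\binom{n+t}{2t+1}$ already vanishes when $t+1 > n$, so the single formula captures both cases of Proposition~\ref{exactfor}. Clearing denominators, $d_{t+1}(n) = \frac{(-1)^t}{(t+1)(2t+1)!}\,g(n)$, where
$$g(X) = X\prod_{j=-t}^{t}(X-j) = \prod_{j=0}^{t}(X^2 - j^2) \in \mathbb{Z}[X].$$
Hence $d_{t+1}(m) - d_{t+1}(n) = \frac{(-1)^t}{(t+1)(2t+1)!}\bigl(g(m) - g(n)\bigr)$. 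Since the left-hand side is an integer (Proposition~\ref{exactfor}) and since $g \in \mathbb{Z}[X]$ forces $\ell^s \mid g(m) - g(n)$ whenever $m \equiv n \pmod{\ell^s}$, we obtain the identity
$${\rm ord}_\ell\bigl(d_{t+1}(m) - d_{t+1}(n)\bigr) = {\rm ord}_\ell\bigl(g(m) - g(n)\bigr) - {\rm ord}_\ell\bigl((t+1)(2t+1)!\bigr).$$
So it is enough to bound ${\rm ord}_\ell(g(m)-g(n))$ from below and ${\rm ord}_\ell((t+1)(2t+1)!)$ from above; the latter is precisely what Lemma~\ref{ineq} supplies.

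For $\ell \ge 3$ this finishes at once: ${\rm ord}_\ell(g(m)-g(n)) \ge s$ because $g$ has integer coefficients, and ${\rm ord}_\ell((t+1)(2t+1)!) \le t$ by Lemma~\ref{ineq}, so ${\rm ord}_\ell(d_{t+1}(m)-d_{t+1}(n)) \ge s - t$.

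For $\ell = 2$ the plan is to extract extra factors of $2$ from $g(m)-g(n)$, the target being $2^{s+t}\mid g(m)-g(n)$, which together with ${\rm ord}_2((t+1)(2t+1)!)\le 2t$ from Lemma~\ref{ineq} again gives the conclusion. Write $g$ as the product of its $2t+2$ linear factors (two copies of $X$, and $X-j$, $X+j$ for $1 \le j \le t$) and expand $g(m)-g(n)$ as the associated telescoping sum: each summand is $(m-n)$ times a product of the remaining $2t+1$ linear factors, evaluated at $m$ or $n$. Since $s \ge 1$ forces $m \equiv n \pmod 2$, the parities of all these factors are pinned down. If $m$ and $n$ are even, a direct count shows at least $t+1$ of the $2t+2$ factors are even, so each summand is divisible by $2^s\cdot 2^t$. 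If $m$ and $n$ are odd, the even factors are exactly the pairs $\{m-j,\,m+j\}$ with $j$ odd — two consecutive even integers, hence each pair has $2$-adic valuation at least $3$ — and there are $\lceil t/2\rceil$ of them; deleting one factor costs at most one pair, so the remaining factors carry $2$-adic valuation at least $3\lceil t/2\rceil - 2$, which is $\ge t$ for every $t\ge 1$ except $t=2$ (the case $t=0$ being trivial, since $g=X^2$). Finally $t=2$, $m,n$ odd is done by hand: there $g$ is a polynomial in $X^2$ and $2^{s+1}\mid m^2-n^2$ give ${\rm ord}_2(g(m)-g(n)) \ge s+1$, while ${\rm ord}_2\bigl(3\cdot 5!\bigr) = {\rm ord}_2(360) = 3$, so ${\rm ord}_2(d_3(m)-d_3(n)) \ge (s+1)-3 = s-2$.

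The only genuinely delicate point is this $\ell = 2$ analysis. Lemma~\ref{ineq} controls the denominator $(t+1)(2t+1)!$ only up to $2^{2t}$, and the case $t=2$ with $m,n$ odd shows that ${\rm ord}_2(g(m)-g(n))$ can be as small as $s+1$; thus the desired inequality is essentially tight, and the bookkeeping between the $2$-adic size of the factorial and the surplus $2$-divisibility of $g(m)-g(n)$ has to be carried out carefully, with the borderline value $t=2$ split off.
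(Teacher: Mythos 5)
Your proof is correct, and it reorganizes the argument along a genuinely cleaner route than the paper's. The paper works with the falling-factorial expression for $d_{t+1}$ and splits into three cases according to the positions of $m,n$ relative to $t+1$ (namely $m\le n<t+1$, $m<t+1\le n$, $t+1\le m\le n$), treating $\ell\ge 3$ and $\ell=2$ separately inside each; your single identity $d_{t+1}(n)=\frac{(-1)^{t}}{(t+1)(2t+1)!}\,g(n)$ with $g(X)=\prod_{j=0}^{t}(X^{2}-j^{2})\in\mathbb{Z}[X]$ absorbs all of these cases at once, since the binomial coefficient in Proposition~\ref{exactfor} vanishes for $n\le t$. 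For $\ell\ge 3$ this collapses the whole proposition to the one-line estimate $\ell^{s}\mid g(m)-g(n)$ combined with Lemma~\ref{ineq}, a real simplification over the paper's case analysis. For $\ell=2$ both proofs must extract $2^{s+t}$ from the numerator: the paper counts even terms in the two falling factorials, factors out $2^{t+1}$, and checks the remaining difference mod $2^{s-1}$, whereas you telescope over the $2t+2$ linear factors of $g$ and count even factors in pairs; both arguments are forced to treat $t=2$ specially using the exact value ${\rm ord}_{2}(5!\cdot 3)=3$ (the paper invokes it in its case $m<t+1\le n$), and you correctly identify this as the place where the estimate is tight.

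Two small points to tighten in your $\ell=2$, $m,n$ odd case. First, $m-j$ and $m+j$ with $j$ odd are not consecutive even integers (they differ by $2j$); the conclusion you want still holds because their sum is $2m\equiv 2\pmod 4$, so exactly one of the two is divisible by $4$ --- state it that way. Second, in a telescoping summand the two surviving members of a pair may be evaluated one at $m$ and one at $n$, and the claim that such a mixed pair still contributes valuation at least $3$ requires $m\equiv n\pmod 4$, not merely $m\equiv n\pmod 2$ as your parity remark provides. This is automatic here, since the proposition only concerns $t\le s-1$, so $t\ge 1$ forces $s\ge 2$, but it should be said explicitly; with these two patches the argument is complete.
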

\begin{proof}
Without lost of generality, let us assume that $n \ge m$.  Assuming first $\ell \ge 3$, it follows from Proposition \ref{exactfor} that for $a \ge t+1$, we have
\begin{equation} \label{usefulid}
d_{t+1}(a) = (-1)^{t} \frac{(a + t)_{2t+1}}{(2t+1)!} \frac{a}{t+1}.
\end{equation}
Write $n = m + q\ell^{s}$ for some integer $q$.  If $m \le n < t+1$, then the claim is obviously true.  If now $m< t+1 \le n$, then we have to show 
\begin{equation} \label{cong1}
d_{t+1}(n) \equiv 0 \pmod{\ell^{s-t}}. 
\end{equation}
By (\ref{usefulid}), we have
$$d_{t+1}(n) = (-1)^{t} \frac{(m + q\ell^{s} + t)_{2t+1}}{(2t+1)!} \frac{n}{t+1}.$$
Since $t+1 >m$, we have $0 \le m+t \le 2t$ and thus $ (m + q\ell^{s} + t)_{2t+1} \equiv 0 \pmod{\ell^{s}}$.  Combining this with Lemma \ref{ineq} gives
$${\rm ord}_{\ell}(d_{t+1}(n)) \ge s - t,$$
which implies (\ref{cong1}).  

If we have $1 \le t+1 \le m \le n$, then
\begin{equation} \label{useful2}
d_{t+1}(n) - d_{t+1}(m) =  \frac{(-1)^{t}}{t+1}\left(m\left(\binom{n+t}{2t+1} - \binom{m +t}{2t+1} \right) + \binom{n+t}{2t+1}q\ell^{s} \right),
\end{equation}
but
$$\binom{n+t}{2t+1} - \binom{m+t}{2t+1} = \frac{(m+q\ell^{s} + t)_{2t+1} - (m + t)_{2t+1}}{(2t+1)!}. $$
Now, $(m+q\ell^{s} + t)_{2t+1} \equiv (m + t)_{2t+1} \pmod{\ell^{s}}$.  Therefore combining this with (\ref{useful2}) and Lemma \ref{ineq} gives the desired congruence as well.

If $\ell = 2$, we first consider $m<t+1\leq n$.  In this case, one of the terms in the product
$$(m + q2^{s} + t)_{2t+1}= (m + q2^{s} + t) \ldots (m + q 2^{s} - t), $$
is equal to $q2^s$,
and moreover, there are at least $t$ even terms
and at least $2$ terms divisible by $4$ when $t\geq 2$.
This gives 
$${\rm ord}_{2}((m + q2^{s} + t)_{2t+1}) \ge s + t $$
unless $t=2$ in which case ${\rm ord}_{2}((m + q2^{s} + t)_{2t+1})\ge s + t -1$.  Combining this with Lemma \ref{ineq} and the equality ${\rm ord}_{2}(5! \cdot 3) = 3$ gives (\ref{cong1}).

At last, we consider $1\leq t+1\leq m\leq n$
where
$$m\left(\binom{n+t}{2t+1} - \binom{m+t}{2t+1}\right) = \frac{m(m+q2^{s} + t)_{2t+1} - m(m + t)_{2t+1}}{(2t+1)!}. $$
It suffices to prove that
$${\rm ord}_2(m(m+q2^{s} + t)_{2t+1} - m(m + t)_{2t+1})\geq s+t.$$
The product $m(m+t)_{2t+1}$ contains exactly $t+1$ even integers, as does the product $m(m+t+q 2^s)_{2t+1}$.
After factoring $2^{t+1}$ out of each product, the remaining difference is visibly congruent to zero mod $2^{s-1}$; indeed,
a typical term from the first product looks like $(m+j)/2$ and the corresponding term in the second product looks like $(m+j)/2+q2^{s-1}$.  Again combining this with (\ref{useful2}) and Lemma \ref{ineq} gives the desired congruence.

\end{proof}

\section{$\ell$-adic limits of shifted Chebyshev polynomials}\label{limits}
Our main references for this section are \cite{Iwasawa:1973}, \cite{Neukirch:1999} and \cite{Robert:2000}.  We start by fixing a rational prime $\ell$.  The unital commutative ring $\mathbb{Z}_{\ell}\llbracket T \rrbracket$ of power series with coefficient in $\mathbb{Z}_{\ell}$ is a local ring, and its unique maximal ideal is given by $\mathfrak{m} = (\ell,T)$.  We view $\mathbb{Z}_{\ell}\llbracket T \rrbracket$ with its usual topology given by the filtration
$$\mathfrak{m} \supseteq \mathfrak{m}^{2} \supseteq \mathfrak{m}^{3} \supseteq \ldots \supseteq \mathfrak{m}^{n} \supseteq \ldots $$
For $Q \in \mathbb{Z}_{\ell}\llbracket T \rrbracket$, let
\begin{equation*}
||Q||=
\begin{cases}
\ell^{-v}, &\text{ if } Q \neq 0;\\
0, &\text{ if } Q = 0,
\end{cases}
\end{equation*}
where $v = {\rm max}\{ n \in \mathbb{Z}_{\ge 0} \, | \, Q \in \mathfrak{m}^{n}\}$ with the understanding that $\mathfrak{m}^{0}$ is the full ring.  Then, for $Q,R \in \mathbb{Z}_{\ell}\llbracket T \rrbracket$, the function $||\cdot||:\mathbb{Z}_{\ell}\llbracket T\rrbracket \longrightarrow \mathbb{R}_{\ge 0}$ satisfies:
\begin{enumerate}
\item $||Q||=0$ if and only if $Q = 0$,
\item $||Q \cdot R|| \le ||Q||\cdot ||R||$,
\item $||Q + R|| \le ||Q|| + ||R||$.
\end{enumerate}
Thus, the pair $(\mathbb{Z}_{\ell}\llbracket T\rrbracket,d)$, where $d(Q,R) = || Q - R||$, is a metric space.  As such, $\mathbb{Z}_{\ell}\llbracket T \rrbracket$ is a complete local ring.  

We have a function $f:\mathbb{N} \longrightarrow \mathbb{Z}_{\ell}\llbracket T \rrbracket$ defined by
$$n \mapsto f(n) = P_{n}(T). $$
\begin{corollary}
The function $f$ is uniformly continuous when $\mathbb{N}$ is endowed with the $\ell$-adic topology.
\end{corollary}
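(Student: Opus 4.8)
The plan is to deduce the statement directly from Proposition~\ref{congruence}: the congruences there control each coefficient $d_{k}$ of $P_{n}(T)$ with an $\ell$-adic loss that grows only linearly in $k$, and this loss is exactly compensated by the factor $T^{k}$.

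First I would record the coefficient-wise description of the filtration. Writing $Q = \sum_{k \ge 0} a_{k}T^{k} \in \mathbb{Z}_{\ell}\llbracket T \rrbracket$, one has $Q \in \mathfrak{m}^{N}$ if and only if ${\rm ord}_{\ell}(a_{k}) + k \ge N$ for every $k \ge 0$ (equivalently, for every $k$ with $0 \le k \le N-1$, since the remaining terms lie in $\mathfrak{m}^{N}$ automatically because $T^{N} \in \mathfrak{m}^{N}$). This follows at once from the fact that $\mathfrak{m}^{N} = (\ell,T)^{N}$ is generated as an ideal by the monomials $\ell^{i}T^{j}$ with $i + j = N$. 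In particular, $\|Q\| \le \ell^{-N}$ precisely when this coefficient condition holds.

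Next I would fix an integer $N \ge 2$ and take $m, n \in \mathbb{N}$ with $m \equiv n \pmod{\ell^{N-1}}$. Since $P_{a}(T) = \sum_{k \ge 1} d_{k}(a)\, T^{k}$ with the convention $d_{k}(a) = 0$ for $k > a$ (Proposition~\ref{exactfor}), we have $P_{m}(T) - P_{n}(T) = \sum_{k \ge 1}\bigl(d_{k}(m) - d_{k}(n)\bigr) T^{k}$. Applying Proposition~\ref{congruence} with $s = N-1$ gives $d_{t+1}(m) \equiv d_{t+1}(n) \pmod{\ell^{(N-1)-t}}$ for $t = 0, 1, \ldots, N-2$; equivalently, with $k = t+1$, one gets ${\rm ord}_{\ell}\bigl(d_{k}(m) - d_{k}(n)\bigr) \ge N - k$ for $1 \le k \le N-1$. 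Hence every coefficient $a_{k} = d_{k}(m) - d_{k}(n)$ of $P_{m}(T) - P_{n}(T)$ satisfies ${\rm ord}_{\ell}(a_{k}) + k \ge N$, so by the first step $P_{m}(T) - P_{n}(T) \in \mathfrak{m}^{N}$ and $\|P_{m}(T) - P_{n}(T)\| \le \ell^{-N}$.

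Finally, to conclude uniform continuity: given $\varepsilon > 0$, I would choose an integer $N \ge 2$ with $\ell^{-N} < \varepsilon$; then for all $m, n \in \mathbb{N}$ with ${\rm ord}_{\ell}(m - n) \ge N - 1$ one has $d(f(m), f(n)) = \|P_{m}(T) - P_{n}(T)\| \le \ell^{-N} < \varepsilon$, and this bound is independent of $m$ and $n$. The only point requiring care is the bookkeeping between the index shift $t \mapsto t+1$ in Proposition~\ref{congruence} and the exponent of $T$, together with the coefficient description of $\mathfrak{m}^{N}$; beyond that there is no real obstacle, as Proposition~\ref{congruence} carries the full weight of the argument.
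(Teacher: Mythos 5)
Your proposal is correct and follows essentially the same route as the paper: the paper's proof likewise applies Proposition~\ref{congruence} to conclude that ${\rm ord}_{\ell}(m-n)\ge N$ forces $P_{m}(T)-P_{n}(T)\in\mathfrak{m}^{N+1}$, with only a harmless shift of index relative to your version. Your explicit coefficient-wise description of $\mathfrak{m}^{N}$ is a detail the paper leaves implicit, but it is accurate and changes nothing substantive.
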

\begin{proof}
It follows from Proposition \ref{congruence} that given any $N \in \mathbb{N}$ and any $m,n \in \mathbb{N}$ satisfying 
$${\rm ord}_{\ell}(m-n) \ge N,$$ 
one has
$$P_{m}(T) - P_{n}(T) \in \mathfrak{m}^{N+1}, $$
and this shows the claim.
\end{proof}
Since $\mathbb{N}$ is dense in $\mathbb{Z}_{\ell}$, the function $f$ can be uniquely extended to a continuous function 
$$\mathbb{Z}_{\ell} \longrightarrow \mathbb{Z}_{\ell}\llbracket T \rrbracket $$
which we denote by the same symbol $f$.  If $a \in \mathbb{Z}_{\ell}$, then we let 
$$P_{a} (T) = f(a) \in \mathbb{Z}_{\ell}\llbracket T \rrbracket. $$
For example, if we take $\ell = 5$ and $a = 1/3 \in \mathbb{Z}_{5}$, then we have
$$P_{1/3}(T) = (4.2012\ldots) T + (4.2342\ldots) T^{2} + (2.2130\ldots) T^{3} + (0.3400\ldots) T^{4} + \ldots \in \mathbb{Z}_{5}\llbracket T \rrbracket. $$
We note that the power series $P_{a}(T)$ begins as follows
$$P_{a}(T) = a^{2}T + \ldots,$$
by (\ref{lead_coef}).

Consider now the field
$$\mathbb{Q}(\zeta_{\ell^{\infty}}) = \mathbb{Q}(\zeta_{\ell^{i}} \, | \, i=1,2,\ldots) \subseteq \mathbb{C}. $$
It is an infinite algebraic extension of $\mathbb{Q}$.  From now on, we fix an embedding
$$\tau: \mathbb{Q}(\zeta_{\ell^{\infty}}) \hookrightarrow \overline{\mathbb{Q}}_{\ell}. $$
For $i=0,1,2,\ldots $ , we let
$$\xi_{\ell^{i}} = \tau(\zeta_{\ell^{i}}) \in \overline{\mathbb{Q}}_{\ell}. $$
Furthermore, if $a \in \mathbb{Z}$, we let
$$\eta_{\ell^{i}}(a) = \tau(\varepsilon_{\ell^{i}}(a)) = (1 - \xi_{\ell^{i}}^{a})(1- \xi_{\ell^{i}}^{-a}) \in \overline{\mathbb{Q}}_{\ell}, $$
and we write $\eta_{\ell^{i}}$ instead of $\eta_{\ell^{i}}(1)$.  Recall that the valuation ${\rm ord}_{\ell}$ as well as the absolute value $|\cdot |_{\ell}$ on $\mathbb{Q}$ can be extended uniquely to $\overline{\mathbb{Q}}_{\ell}$ and also all the way up to $\mathbb{C}_{\ell}$.  We will denote the valuation on $\mathbb{C}_{\ell}$ by $v_{\ell}$ and the absolute value by $|\cdot|_{\ell}$.  They are related to one another via $v_{\ell}(x) = - \log_{\ell}(|x|_{\ell})$ for all $x \in \mathbb{C}_{\ell}$.

If $x \in \mathbb{Q}(\zeta_{\ell^{i}})$ and if $\mathcal{L}_{i}$ is the unique prime ideal of $\mathbb{Q}(\zeta_{\ell^{i}})$ lying above $\ell$, we have
\begin{equation} \label{rel_bet_val}
v_{\ell}(\tau(x)) = \frac{1}{\varphi(\ell^{i})} {\rm ord}_{\mathcal{L}_{i}}(x),
\end{equation}
for all $x \in \mathbb{Q}(\zeta_{\ell^{i}})$.  Here $\varphi$ denotes the Euler $\varphi$-function and ${\rm ord}_{\mathcal{L}_{i}}$ is the valuation on $\mathbb{Q}(\zeta_{\ell^{i}})$ associated to the prime ideal $\mathcal{L}_{i}$.  Since ${\rm ord}_{\mathcal{L}_{i}}(1-\zeta_{\ell^{i}}) = 1$, we have
\begin{equation} \label{val_of_ele}
v_{\ell}(\eta_{\ell^{i}}) = \frac{2}{\varphi(\ell^{i})} \text{ and } |\eta_{\ell^{i}}|_{\ell} = \ell^{-2/\varphi(\ell^{i})}.
\end{equation}

From now on, we let
$$D = \{x \in \mathbb{C}_{\ell} \, : \, |x|_{\ell} < 1 \}. $$
Fix $i \in \mathbb{N}$, and let $\alpha = \xi_{\ell^{i}} - 1$.  We have $|\alpha|_{\ell} = \ell^{-1/\varphi(\ell^{i})} < 1$, so that $\alpha \in D$ and thus $|\alpha|_{\ell}^{n} \to 0$ as $n \to \infty$.  By the theory of Mahler series (see Chapter $4$ of \cite{Robert:2000}), the function $\mathbb{N} \longrightarrow \mathbb{C}_{\ell}$ defined via 
$$a \longrightarrow \xi_{\ell^{i}}^{a} $$
extends to a continuous function $\mathbb{Z}_{\ell} \longrightarrow \mathbb{C}_{\ell}$ which we denote by the same symbol.  Note that if
$$a = \sum_{k=0}^{\infty}a_{k}\ell^{k} \in \mathbb{Z}_{\ell}, \, \, a_{k} \in \{0,1,\ldots,\ell-1 \},$$
then
\begin{equation} \label{concrete}
\xi_{\ell^{i}}^{a} = \xi_{\ell^{i}}^{\sum_{k=0}^{i-1}a_{k}\ell^{k}}. 
\end{equation}
We obtain for each $i \in \mathbb{N}$ a continuous function $\mathbb{Z}_{\ell} \longrightarrow \mathbb{C}_{\ell}$ defined via
$$a \mapsto \eta_{\ell^{i}}(a) =  (1 - \xi_{\ell^{i}}^{a})(1- \xi_{\ell^{i}}^{-a}). $$
From (\ref{concrete}), we actually have $\eta_{\ell^{i}}(\mathbb{Z}_{\ell}) \subseteq \overline{\mathbb{Q}}_{\ell}$.

Given any $Q(T) = \sum_{k=0}^{\infty}a_{k}T^{k} \in \mathbb{Z}_{\ell}\llbracket T \rrbracket$, it defines a continuous function $Q:D \longrightarrow \mathbb{C}_{\ell}$ via
$$x \mapsto Q(x) = \sum_{k=0}^{\infty}a_{k}x^{k}. $$
By (\ref{val_of_ele}), we thus get for every $a \in \mathbb{Z}_{\ell}$ a well-defined number $P_{a}(\eta_{\ell^{i}}) \in \mathbb{C}_{\ell}$.
\begin{lemma} \label{cont}
For each $i \in \mathbb{N}$, the function ${\rm ev}_{\eta_{\ell^{i}}}:\mathbb{Z}_{\ell}\llbracket T \rrbracket \longrightarrow \mathbb{C}_{\ell}$ given by
$$Q(T) \mapsto {\rm ev}_{\eta_{\ell^{i}}}(Q(T)) = Q(\eta_{\ell^{i}}) $$
is uniformly continuous.
\end{lemma}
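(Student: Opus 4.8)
The plan is to establish something slightly stronger than uniform continuity, namely that ${\rm ev}_{\eta_{\ell^{i}}}$ is Hölder continuous for the metrics $d$ on $\mathbb{Z}_{\ell}\llbracket T\rrbracket$ and $|\cdot|_{\ell}$ on $\mathbb{C}_{\ell}$, with a Hölder exponent depending only on $i$. Write $\eta = \eta_{\ell^{i}}$ and recall from (\ref{val_of_ele}) that $|\eta|_{\ell} = \ell^{-\delta}$ with $\delta = 2/\varphi(\ell^{i}) > 0$; in particular $\eta \in D$, so the evaluation is well-defined as already observed in the excerpt. Since $|\eta|_{\ell} < 1$, evaluation at $\eta$ is additive on $\mathbb{Z}_{\ell}\llbracket T\rrbracket$, so ${\rm ev}_{\eta}(Q) - {\rm ev}_{\eta}(R) = (Q-R)(\eta)$, and it suffices to bound $|Q(\eta)|_{\ell}$ in terms of $\|Q\|$ for a single power series $Q$ and then apply the bound to $Q - R$.

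First I would unwind the definition of $\|\cdot\|$: the condition $\|Q\| \le \ell^{-N}$ is equivalent to $Q \in \mathfrak{m}^{N}$. Since $\mathfrak{m}^{N}$ is the ideal generated by $\ell^{j}T^{N-j}$ for $0 \le j \le N$, this means $Q = \sum_{j=0}^{N} \ell^{j}T^{N-j}h_{j}(T)$ for some $h_{j} \in \mathbb{Z}_{\ell}\llbracket T\rrbracket$. Because each $h_{j}$ has coefficients in $\mathbb{Z}_{\ell}$ and $|\eta|_{\ell} < 1$, we get $|h_{j}(\eta)|_{\ell} \le 1$, and the ultrametric inequality yields
\[
|Q(\eta)|_{\ell} \le \max_{0 \le j \le N} \ell^{-j}\,|\eta|_{\ell}^{\,N-j} = \max_{0 \le j \le N} \ell^{\,-N\delta - j(1-\delta)}.
\]
A short computation (splitting into the cases $\delta \le 1$ and $\delta > 1$) shows the exponent $-N\delta - j(1-\delta)$ is at most $-N\min(\delta,1)$ for every $j \in \{0,\dots,N\}$, so $|Q(\eta)|_{\ell} \le \ell^{-cN}$ with $c := \min\bigl(2/\varphi(\ell^{i}),\,1\bigr) > 0$.

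Applying this with $N$ the largest integer such that $Q \in \mathfrak{m}^{N}$ (so $\|Q\| = \ell^{-N}$), and handling $Q = 0$ trivially, gives $|{\rm ev}_{\eta}(Q)|_{\ell} \le \|Q\|^{\,c}$ for all $Q \in \mathbb{Z}_{\ell}\llbracket T\rrbracket$, hence
\[
\bigl|{\rm ev}_{\eta}(Q) - {\rm ev}_{\eta}(R)\bigr|_{\ell} = \bigl|(Q-R)(\eta)\bigr|_{\ell} \le d(Q,R)^{\,c}
\]
for all $Q, R$. Since $c$ depends only on $i$, this is a uniform modulus of continuity and the lemma follows. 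The only point that needs care is the passage from the $\mathfrak{m}$-adic filtration to the $|\cdot|_{\ell}$-size of the evaluated series: an individual monomial $a_{k}T^{k}$ occurring in $Q$ need not be small $\ell$-adically, but membership in $\mathfrak{m}^{N}$ forces it to be small for a different reason depending on $k$ — either $\ell^{N-k} \mid a_{k}$ when $k < N$, or $|\eta|_{\ell}^{k}$ is small when $k \ge N$ — and it is precisely the fact that both $\ell$ and $T$ are sent into the maximal ideal of the valuation ring of $\mathbb{C}_{\ell}$ that makes the estimate close.
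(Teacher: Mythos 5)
Your proof is correct, and it reduces the problem to the same key estimate as the paper does: showing that membership of $Q$ in $\mathfrak{m}^{N}$ forces $|Q(\eta_{\ell^{i}})|_{\ell}$ to be small, with a bound depending only on $N$ and $i$. The execution differs, though. The paper works coefficient-by-coefficient: from $Q=\sum a_{k}T^{k}\in\mathfrak{m}^{N}$ it extracts $|a_{k}|_{\ell}\le \ell^{k-N}$ for $k\le N$, splits the evaluated series at $k=N$, and bounds the two pieces by $\frac{1}{\ell^{N}}\sum_{k=0}^{N-1}(\ell x)^{k}+\frac{x^{N}}{1-x}$ with $x=|\eta_{\ell^{i}}|_{\ell}$, using the archimedean-style triangle inequality; this is a soft estimate that visibly tends to $0$ but is not optimized. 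You instead write $Q$ in terms of the generators $\ell^{j}T^{N-j}$ of $\mathfrak{m}^{N}$ and invoke the ultrametric inequality, which collapses the estimate to a single maximum $\max_{j}\ell^{-N\delta-j(1-\delta)}$ and yields the clean Hölder bound $|Q(\eta_{\ell^{i}})|_{\ell}\le \|Q\|^{c}$ with $c=\min\bigl(2/\varphi(\ell^{i}),1\bigr)$. Your route buys an explicit, quantitative modulus of continuity (and in particular makes transparent why the exponent degrades as $i$ grows, since $\varphi(\ell^{i})\to\infty$), at the cost of the small extra case analysis on whether $\delta=2/\varphi(\ell^{i})$ exceeds $1$; the paper's route is slightly quicker if one only needs the qualitative statement. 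All the supporting steps you use --- the equivalence $\|Q\|\le\ell^{-N}\iff Q\in\mathfrak{m}^{N}$, the additivity of evaluation at a point of $D$, and the bound $|h_{j}(\eta_{\ell^{i}})|_{\ell}\le 1$ --- are sound.
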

\begin{proof}
It suffices to show that for all $\varepsilon > 0$ there exists $N \in \mathbb{N}$ such that if $Q(T) \in \mathfrak{m}^{N}$, then $|Q(\eta_{\ell^{i}})|_{\ell} < \varepsilon$.  To simplify the notation, let $x = |\eta_{\ell^{i}}|_{\ell} = \ell^{-2/\varphi(\ell^{i})} < 1$.  
If 
$$Q(T) = \sum_{k=0}^{\infty}a_{k}T^{k} \in \mathfrak{m}^{N}, $$
then $|a_{k}|_{\ell} \le \ell^{k-N}$ for $k=0,\ldots,N$, and thus
\begin{equation*}
\begin{aligned}
\Big|\sum_{k=0}^{\infty}a_{k}(\eta_{\ell^{i}})^{k}\Big|_{\ell} &\le \sum_{k=0}^{N-1}|a_{k}|_{\ell} \cdot x^{k} + \sum_{k=N}^{\infty}x^{k} \\
&\le \frac{1}{\ell^{N}}\sum_{k=0}^{N-1}(\ell x)^{k} + \frac{ x^{N}}{1-x},
\end{aligned}
\end{equation*}
which can be made arbitrarily small for $N$ large. 
\end{proof}
As a consequence, we obtain the following result.
\begin{corollary} \label{key_prop}
Given $a \in \mathbb{Z}_{\ell}$ and any $i \in \mathbb{Z}_{\ge 0}$, we have $P_{a}(\eta_{\ell^{i}}) = \eta_{\ell^{i}}(a)$.
\end{corollary}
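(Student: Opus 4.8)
The plan is to recognise both sides of the claimed identity, as functions of $a\in\mathbb{Z}_{\ell}$, as continuous maps $\mathbb{Z}_{\ell}\to\mathbb{C}_{\ell}$ that agree on the dense subset $\mathbb{N}$, and then conclude by continuity. First I would dispose of the degenerate case $i=0$: here $\xi_{\ell^{0}}=\xi_{1}=1$, so $\eta_{1}(a)=(1-1)(1-1)=0$ for every $a$, while $P_{a}(T)$ has vanishing constant term (by (\ref{lead_coef})), so that $P_{a}(\eta_{1})=P_{a}(0)=0$ as well. From now on assume $i\ge 1$, in which case $\eta_{\ell^{i}}\in D$ by (\ref{val_of_ele}) and hence evaluation at $\eta_{\ell^{i}}$ is defined on all of $\mathbb{Z}_{\ell}\llbracket T\rrbracket$.

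Next I would assemble the two continuous functions. The map $a\mapsto P_{a}(\eta_{\ell^{i}})$ is, by the very definition $P_{a}(T)=f(a)$, the composite ${\rm ev}_{\eta_{\ell^{i}}}\circ f$; since $f\colon\mathbb{Z}_{\ell}\to\mathbb{Z}_{\ell}\llbracket T\rrbracket$ is continuous (it was constructed in \S\ref{limits} precisely as the continuous extension of $n\mapsto P_{n}(T)$) and ${\rm ev}_{\eta_{\ell^{i}}}\colon\mathbb{Z}_{\ell}\llbracket T\rrbracket\to\mathbb{C}_{\ell}$ is uniformly continuous by Lemma \ref{cont}, this composite is continuous. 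On the other hand, $a\mapsto\eta_{\ell^{i}}(a)$ is continuous on $\mathbb{Z}_{\ell}$ by the theory of Mahler series, as recorded in the discussion preceding Lemma \ref{cont}.

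Finally I would check that these two functions agree on $\mathbb{N}$. For $n\in\mathbb{N}$ we have $f(n)=P_{n}(T)$, the genuine polynomial $P_{n}(X)\in\mathbb{Z}[X]$, so ${\rm ev}_{\eta_{\ell^{i}}}(f(n))$ is just its value at $X=\eta_{\ell^{i}}$; applying the field embedding $\tau$ to (\ref{identity}) with $m=\ell^{i}$ and using that $P_{n}$ has integer coefficients gives
$$P_{n}(\eta_{\ell^{i}})=P_{n}\bigl(\tau(\varepsilon_{\ell^{i}})\bigr)=\tau\bigl(P_{n}(\varepsilon_{\ell^{i}})\bigr)=\tau\bigl(\varepsilon_{\ell^{i}}(n)\bigr)=\eta_{\ell^{i}}(n).$$
Since $\mathbb{N}$ is dense in $\mathbb{Z}_{\ell}$ and $\mathbb{C}_{\ell}$ is Hausdorff, the two continuous maps coincide on all of $\mathbb{Z}_{\ell}$, which is the assertion. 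I do not expect any genuine obstacle here: the only points deserving a word are that $\tau$ commutes with evaluation of an integer-coefficient polynomial (immediate, since $\tau$ is a ring homomorphism) and the bookkeeping identification of $a\mapsto{\rm ev}_{\eta_{\ell^{i}}}(f(a))$ with the function ``$a\mapsto P_{a}(\eta_{\ell^{i}})$'' of the text, which is immediate from $P_{a}(T)=f(a)$.
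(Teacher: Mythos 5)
Your proposal is correct and follows essentially the same route as the paper's own proof: continuity of ${\rm ev}_{\eta_{\ell^{i}}}\circ f$ via Lemma \ref{cont}, continuity of $a\mapsto\eta_{\ell^{i}}(a)$, agreement on $\mathbb{N}$ via (\ref{identity}), and density of $\mathbb{N}$ in $\mathbb{Z}_{\ell}$. The extra details you supply (the explicit $i=0$ case and the transfer of (\ref{identity}) through the embedding $\tau$) are correct refinements of what the paper leaves implicit.
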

\begin{proof}
If $i=0$, the equality is clear.  If $i \ge 1$, then the function ${\rm ev}_{\eta_{\ell^{i}}} \circ f:\mathbb{Z}_{\ell} \longrightarrow \mathbb{C}_{\ell}$ is continuous by Lemma \ref{cont}.  Furthermore, the function $\mathbb{Z}_{\ell} \longrightarrow \mathbb{C}_{\ell}$ given by $a \mapsto \eta_{\ell^{i}}(a)$ is also continuous as we pointed out before.  Since both these functions agree on $\mathbb{N}$ by (\ref{identity}) and $\mathbb{N}$ is dense in $\mathbb{Z}_{\ell}$, the claim follows.
\end{proof}

\section{Abelian $\ell$-towers of bouquets}\label{application}
For this section, we assume that the reader is familiar with \cite{Vallieres:2021} and in particular with the notion of an abelian $\ell$-tower of multigraphs.  (See Definition $4.1$ of \cite{Vallieres:2021}.)  Recall that if $S$ is a finite set and 
$$i:S \longrightarrow \mathbb{Z}_{\ell} $$
is any function for which there exists $s \in S$ such that $i(s) \in \mathbb{Z}_{\ell}^{\times}$, then one gets a regular abelian $\ell$-tower of connected multigraphs
$$X = B_{|S|} \longleftarrow X(\mathbb{Z}/\ell \mathbb{Z},S,i_{1}) \longleftarrow X(\mathbb{Z}/\ell^{2}\mathbb{Z},S,i_{2}) \longleftarrow \ldots \longleftarrow X(\mathbb{Z}/\ell^{n}\mathbb{Z},S,i_{n}) \longleftarrow \ldots,$$
where $B_{t}$ denotes a bouquet with $t$ loops and $X(\mathbb{Z}/\ell^{n}\mathbb{Z},S,i_{n})$ is the Cayley-Serre multigraph associated to the data $(\mathbb{Z}/\ell^{n}\mathbb{Z},S,i_{n})$.  The function $i_{n}$ is the one obtained from the composition
$$S \stackrel{i}{\longrightarrow} \mathbb{Z}_{\ell} \longrightarrow \mathbb{Z}_{\ell}/\ell^{n}\mathbb{Z}_{\ell} \stackrel{\simeq}{\longrightarrow} \mathbb{Z}/\ell^{n}\mathbb{Z}. $$
Theorem $5.6$ of \cite{Vallieres:2021} applies to regular abelian $\ell$-towers as above in the case where
$$i(S) \subseteq \mathbb{Z}. $$
We can now remove this condition.  (The case $|S| = 1$ has already been treated separately.  See the discussion after Definition $4.1$ of \cite{Vallieres:2021}.)
\begin{theorem}\label{maintheorem}
Let $S =\{s_{1},\ldots,s_{t} \}$ be a finite set with cardinality $t \ge 2$, and let $i:S \longrightarrow \mathbb{Z}_{\ell}$ be a function.  For $j=1,\ldots, t$, let $a_{j}$ be the $\ell$-adic integer satisfying $i(s_{j})=a_{j}$.  Assume that at least one of $a_{1},\ldots,a_{t}$ is relatively prime with $\ell$ (so that our Cayley-Serre multigraphs are connected).  Consider the regular abelian $\ell$-tower
$$X = B_{t} \longleftarrow X(\mathbb{Z}/\ell \mathbb{Z},S,i_{1}) \longleftarrow X(\mathbb{Z}/\ell^{2}\mathbb{Z},S,i_{2}) \longleftarrow \ldots \longleftarrow X(\mathbb{Z}/\ell^{n}\mathbb{Z},S,i_{n}) \longleftarrow \ldots$$
and define the $\ell$-adic integers $c_{j}$ via 
\begin{equation*}
\begin{aligned}
Q(T) &= P_{a_{1}}(T) + \ldots + P_{a_{t}}(T) \\
&= c_{1}T + c_{2}T^{2} + \ldots \in \mathbb{Z}_{\ell}\llbracket T \rrbracket.
\end{aligned}
\end{equation*}
Let
$$\mu = {\rm min}\{v_{\ell}(c_{j})\, | \, j =1,2,\ldots\}, $$
and 
$$\lambda = {\rm min}\{2j \, | \, j \in \mathbb{N} \text{ and } v_{\ell}(c_{j}) = \mu\}  -1. $$
If $\kappa_{n}$ denotes the number of spanning trees of $X(\mathbb{Z}/\ell^{n}\mathbb{Z},S,i_{n})$, then there exist a nonnegative integer $n_{0}$ and a constant $\nu \in \mathbb{Z}$ (depending also on the $a_{j}$) such that
$${\rm ord}_{\ell}(\kappa_{n}) = \mu \ell^{n} + \lambda n + \nu,$$
when $n \ge n_{0}$.  
\end{theorem}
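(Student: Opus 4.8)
The plan is to read off the growth of ${\rm ord}_{\ell}(\kappa_{n})$ from the behaviour of the single power series $Q(T)\in\mathbb{Z}_{\ell}\llbracket T\rrbracket$ near the boundary of $D$, following the strategy used for Theorem~$5.6$ in~\cite{Vallieres:2021} but with $P_{a}(T)$ now understood as $f(a)$ for $a\in\mathbb{Z}_{\ell}$. First I would recall the factorization of the number of spanning trees coming from the matrix-tree theorem for Galois covers (equivalently, from the factorization of the Ihara zeta function into Artin--Ihara $L$-functions, as in~\cite{Vallieres:2021}): since $B_{t}$ has a single vertex and the covers are connected by hypothesis,
\begin{equation*}
\ell^{n}\kappa_{n} = \prod_{\chi\neq\mathbf{1}}\ \sum_{j=1}^{t}\bigl(2 - \chi(a_{j}) - \chi(a_{j})^{-1}\bigr),
\end{equation*}
the product running over the $\ell^{n}-1$ nontrivial characters of $\mathbb{Z}/\ell^{n}\mathbb{Z}$, every factor being nonzero. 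I would then group the characters by their order $\ell^{i}$ for $1\le i\le n$: a character of order $\ell^{i}$ has $\chi(1)=\zeta_{\ell^{i}}^{c}$ for a unique $c\in(\mathbb{Z}/\ell^{i}\mathbb{Z})^{\times}$, and its factor equals $\sum_{j}\varepsilon_{\ell^{i}}(ca_{j})$. Applying $\tau$ and arguing as in the proof of Corollary~\ref{key_prop}, but first twisting by the automorphism $\zeta_{\ell^{i}}\mapsto\zeta_{\ell^{i}}^{c}$, one obtains $\tau\bigl(\sum_{j}\varepsilon_{\ell^{i}}(ca_{j})\bigr)=\sum_{j}P_{a_{j}}\bigl(\eta_{\ell^{i}}(c)\bigr)=Q\bigl(\eta_{\ell^{i}}(c)\bigr)$; as $\eta_{\ell^{i}}(c)$ is a Galois conjugate of $\eta_{\ell^{i}}$ over $\mathbb{Q}_{\ell}$ and $v_{\ell}$ is Galois-invariant, each of the $\varphi(\ell^{i})$ factors of order $\ell^{i}$ has valuation $v_{\ell}(Q(\eta_{\ell^{i}}))$. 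Taking ${\rm ord}_{\ell}$ then gives
\begin{equation*}
{\rm ord}_{\ell}(\kappa_{n}) = -n + \sum_{i=1}^{n}\varphi(\ell^{i})\, v_{\ell}\bigl(Q(\eta_{\ell^{i}})\bigr).
\end{equation*}

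Next I would evaluate $v_{\ell}(Q(\eta_{\ell^{i}}))$ by Weierstrass preparation. Since $\kappa_{n}\ge 1$, no factor above vanishes, so $Q\neq 0$ and $\mu,\lambda$ are well defined; writing $k_{0}=\min\{k\in\mathbb{N}:v_{\ell}(c_{k})=\mu\}$ we have $\lambda=2k_{0}-1$. Weierstrass preparation yields $Q(T)=\ell^{\mu}\,U(T)\,g(T)$ with $U\in\mathbb{Z}_{\ell}\llbracket T\rrbracket^{\times}$ and $g(T)=T^{k_{0}}+b_{k_{0}-1}T^{k_{0}-1}+\cdots+b_{0}$ a distinguished polynomial. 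For $x\in D$ one has $v_{\ell}(U(x))=0$, and if $v_{\ell}(x)<1/k_{0}$ then $v_{\ell}(x^{k_{0}})=k_{0}v_{\ell}(x)<1\le v_{\ell}(b_{j}x^{j})$ for every $j<k_{0}$, so the leading term dominates and $v_{\ell}(g(x))=k_{0}v_{\ell}(x)$. Since $v_{\ell}(\eta_{\ell^{i}})=2/\varphi(\ell^{i})$ by~(\ref{val_of_ele}), this applies to $x=\eta_{\ell^{i}}$ as soon as $\varphi(\ell^{i})>2k_{0}$, i.e.\ for $i\ge i_{0}:=\min\{i:\varphi(\ell^{i})>2k_{0}\}$; for such $i$ one gets
\begin{equation*}
\varphi(\ell^{i})\,v_{\ell}\bigl(Q(\eta_{\ell^{i}})\bigr)=\mu\,\varphi(\ell^{i})+2k_{0}=\mu\,\varphi(\ell^{i})+\lambda+1.
\end{equation*}

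Finally, for $n\ge n_{0}:=i_{0}$ I would split the sum at $i_{0}$ and use $\sum_{i=0}^{m}\varphi(\ell^{i})=\ell^{m}$ to get
\begin{equation*}
{\rm ord}_{\ell}(\kappa_{n}) = -n + \sum_{i=1}^{i_{0}-1}\varphi(\ell^{i})v_{\ell}\bigl(Q(\eta_{\ell^{i}})\bigr) + \mu\bigl(\ell^{n}-\ell^{i_{0}-1}\bigr) + (\lambda+1)(n-i_{0}+1) = \mu\ell^{n}+\lambda n+\nu,
\end{equation*}
with $\nu=\sum_{i=1}^{i_{0}-1}\varphi(\ell^{i})v_{\ell}(Q(\eta_{\ell^{i}}))-\mu\ell^{i_{0}-1}-(\lambda+1)(i_{0}-1)$; this is an integer because each $\varphi(\ell^{i})v_{\ell}(Q(\eta_{\ell^{i}}))$ is, the element $\eta_{\ell^{i}}$ lying in a totally ramified extension of $\mathbb{Q}_{\ell}$ of degree $\varphi(\ell^{i})$. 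I expect the genuinely delicate point to lie not in this section but in what it imports: replacing the integer-parameter identity~(\ref{identity}) by $P_{a_{j}}(\eta_{\ell^{i}}(c))=\eta_{\ell^{i}}(ca_{j})$ for arbitrary $a_{j}\in\mathbb{Z}_{\ell}$ --- that is, commuting the power series $Q$ (no longer a polynomial) past a Galois automorphism and past the continuous extension $f$ --- which is precisely what Lemma~\ref{cont} and Corollary~\ref{key_prop} are set up to supply. The Weierstrass-preparation estimate and the bookkeeping with the factor $\ell^{-n}$ (which is responsible for the $-1$ in the definition of $\lambda$) are then routine.
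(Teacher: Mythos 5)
Your proposal is correct and follows the same skeleton as the paper's proof: reduce to ${\rm ord}_{\ell}(\kappa_{n}) = -n + \sum_{i=1}^{n}\varphi(\ell^{i})\,v_{\ell}(Q(\eta_{\ell^{i}}))$, show that $\varphi(\ell^{i})\,v_{\ell}(Q(\eta_{\ell^{i}})) = \mu\,\varphi(\ell^{i}) + \lambda + 1$ for all large $i$, and sum using $\sum_{i\le m}\varphi(\ell^{i}) = \ell^{m}$. The two places where you deviate are in technique rather than structure. For the starting identity the paper simply quotes the first displayed equation of the proof of Theorem $5.6$ in \cite{Vallieres:2021} together with (\ref{rel_bet_val}) and Corollary \ref{key_prop}, whereas you rederive it from the character-by-character factorization of $\ell^{n}\kappa_{n}$ and Galois invariance of $v_{\ell}$; this is the same content made self-contained, and your explicit observation that $\kappa_{n}\ge 1$ forces every factor, hence $Q$ and each $Q(\eta_{\ell^{i}})$, to be nonzero is a point the paper leaves implicit. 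For the central estimate the paper avoids Weierstrass preparation: writing $Q=\ell^{\mu}R$ with $R\notin\ell\,\mathbb{Z}_{\ell}\llbracket T\rrbracket$, it compares the term valuations $v_{\ell}(e_{k})+2k/\varphi(\ell^{i})$ directly and notes that the $k_{0}$-th term strictly dominates once $2k_{0}\le\varphi(\ell^{i})$, while you factor $Q=\ell^{\mu}U\,g$ with $g$ distinguished of degree $k_{0}$ and use $v_{\ell}(U(\eta_{\ell^{i}}))=0$ and $v_{\ell}(g(\eta_{\ell^{i}}))=2k_{0}/\varphi(\ell^{i})$ for $\varphi(\ell^{i})>2k_{0}$; both yield exactly the claim (\ref{claim}) with essentially the same threshold $n_{0}$, the preparation-theorem route being the standard Iwasawa-theoretic packaging and the paper's argument the more elementary one. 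Your final bookkeeping, including the explicit $\nu$ and its integrality, matches the paper's computation.
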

\begin{proof}
Using (\ref{rel_bet_val}), the first equation in the proof of Theorem $5.6$ in \cite{Vallieres:2021} becomes
$${\rm ord}_{\ell}(\kappa_{n}) = -n + \sum_{i = 1}^{n}\varphi(\ell^{i})v_{\ell}(\eta_{\ell^{i}}(a_{1}) + \ldots + \eta_{\ell^{i}}(a_{t})).$$
By Corollary \ref{key_prop}, we have
\begin{equation*}
\begin{aligned}
\eta_{\ell^{i}}(a_{1}) + \ldots + \eta_{\ell^{i}}(a_{t}) &= P_{a_{1}}(\eta_{\ell^{i}}) + \ldots + P_{a_{t}}(\eta_{\ell^{i}})\\
&= Q(\eta_{\ell^{i}}).
\end{aligned}
\end{equation*}
We claim that for $i$ large
\begin{equation} \label{claim}
v_{\ell}\left(Q(\eta_{\ell^{i}}) \right) = \mu + \frac{\lambda + 1}{\varphi(\ell^{i})}.
\end{equation}
Let us write $Q(T) = \ell^{\mu} \cdot R(T)$ for some
$$R(T) = \sum_{k=1}^{\infty} e_{k}T^{k} \in \mathbb{Z}_{\ell}\llbracket T \rrbracket \smallsetminus \ell \mathbb{Z}_{\ell}\llbracket T \rrbracket. $$
To prove (\ref{claim}), it suffices to show
\begin{equation}\label{claim2}
v_{\ell}\left( R(\eta_{\ell^{i}})\right) = \frac{\lambda+1}{\varphi(\ell^{i})}, 
\end{equation}
when $i$ is large.  Let us define
$$k_{0} = {\rm min}\{k\in\mathbb{N} \mid v_\ell(e_k)=0\} $$
so that we have $\lambda+1 = 2k_{0}$.
We assume $i$ is large enough so that
$$
  \frac{2k_0}{\varphi(\ell^i)}\leq 1
$$
as this ensures that the values
$$
  v_\ell(e_k(\eta_{\ell^i})^k)=v_\ell(e_k)+\frac{2k}{\varphi(\ell^i)}
$$
are distinct for $k=1,\dots, k_0$.  (Indeed, if two such values were equal with $1\leq j<k\leq k_0$, then $2(k-j)/\varphi(\ell^i)\in\mathbb{Z}$ and hence
$\varphi(\ell^i)\leq 2(k-j)<2k_0$.)  From this, together with the fact that for $k<k_0$ one has
$$
  v_\ell(e_k(\eta_{\ell^i})^k)>v_\ell(e_k)\geq 1
  \,,
$$
we obtain
$$
  v_\ell\left(\sum_{k=1}^{k_0}e_k(\eta_{\ell^i})^k\right)=\frac{2k_0}{\varphi(\ell^i)}=\frac{\lambda+1}{\varphi(\ell^i)}
  \,.
$$
In addition, we have
$$
  v_\ell\left(\sum_{k=k_0+1}^\infty e_k(\eta_{l^i})^k\right)
  \geq \min_{k> k_0}\frac{2k}{\varphi(\ell^i)}=\frac{2(k_0+1)}{\varphi(\ell^i)}
$$
and (\ref{claim2}) follows.

Therefore, there exists $n_{0} \ge 0$ and an integer $C$ such that if $n \ge n_{0}$, then
\begin{equation*}
\begin{aligned}
{\rm ord}_{\ell}(\kappa_{n}) &= -n + C +  \sum_{i = n_{0}}^{n}\varphi(\ell^{i})v_{\ell}(Q(\eta_{\ell^{i}})) \\
&= -n + C + \sum_{i = n_{0}}^{n}(\mu \cdot \varphi(\ell^{i}) + (\lambda+1))\\
&= -n + C + (\lambda+1)(n-(n_{0}-1)) + \mu(\ell^{n} - \ell^{n_{0} - 1}),
\end{aligned}
\end{equation*}
and this ends the proof.
\end{proof}
From the proof of the last theorem, it follows that any $n_{0} \in \mathbb{N}$ satisfying
$$n_{0} \ge \log_{\ell} \left(\frac{\ell}{\ell - 1} (\lambda + 1) \right) $$
will work.

In addition, we point out that since $c_{1} = a_{1}^{2} + \ldots + a_{t}^{2}$ by (\ref{lead_coef}), if 
$$\ell \nmid a_{1}^{2}+ \ldots + a_{t}^{2},$$
then we have $\mu=0, \lambda=1$ and $\nu = 0$.

\section{Examples}\label{examples}
The computations of the number of spanning trees in this section have been performed with the software \cite{SAGE}.  The computations of the power series $P_{a}(T)$ have been performed with the software \cite{PARI}.

\begin{enumerate}
\item Let $a_{1} =1/3, a_{2} = 3/5$ and $\ell = 2$.  Then, we get:
\begin{equation*}
\begin{tikzpicture}[baseline={([yshift=-1.7ex] current bounding box.center)}]
\node[draw=none,minimum size=2cm,regular polygon,regular polygon sides=1] (a) {};
\foreach \x in {1}
  \fill (a.corner \x) circle[radius=0.7pt];
\draw (a.corner 1) to [in=50,out=130,loop] (a.corner 1);
\draw (a.corner 1) to [in=50,out=130,distance = 0.5cm,loop] (a.corner 1);
\end{tikzpicture}
\longleftarrow \, \, \,
\begin{tikzpicture}[baseline={([yshift=-0.6ex] current bounding box.center)}]
\node[draw=none,minimum size=2cm,regular polygon,rotate = -45,regular polygon sides=4] (a) {};

  \fill (a.corner 1) circle[radius=0.7pt];
  \fill (a.corner 3) circle[radius=0.7pt];
  
  \path (a.corner 1) edge [bend left=20] (a.corner 3);
  \path (a.corner 1) edge [bend left=60] (a.corner 3);
  \path (a.corner 1) edge [bend right=20] (a.corner 3);
  \path (a.corner 1) edge [bend right=60] (a.corner 3);

\end{tikzpicture}
\, \, \, \longleftarrow 
\begin{tikzpicture}[baseline={([yshift=-0.6ex] current bounding box.center)}]
\node[draw=none,minimum size=2cm,regular polygon,regular polygon sides=4] (a) {};

\foreach \x in {1,2,...,4}
  \fill (a.corner \x) circle[radius=0.7pt];

\path (a.corner 1) edge [bend left=20] (a.corner 2);
\path (a.corner 1) edge [bend right=20] (a.corner 2);
\path (a.corner 2) edge [bend left=20] (a.corner 3);
\path (a.corner 2) edge [bend right=20] (a.corner 3);
\path (a.corner 3) edge [bend left=20] (a.corner 4);
\path (a.corner 3) edge [bend right=20] (a.corner 4);
\path (a.corner 4) edge [bend left=20] (a.corner 1);
\path (a.corner 4) edge [bend right=20] (a.corner 1);
\end{tikzpicture}
\longleftarrow
\begin{tikzpicture}[baseline={([yshift=-0.6ex] current bounding box.center)}]
\node[draw=none,minimum size=2cm,regular polygon,regular polygon sides=8] (a) {};

\foreach \x in {1,2,...,8}
  \fill (a.corner \x) circle[radius=0.7pt];
  
\foreach \y\z in {1/4,2/5,3/6,4/7,5/8,6/1,7/2,8/3}
  \path (a.corner \y) edge (a.corner \z);
  
\foreach \y\z in {1/8,2/1,3/2,4/3,5/4,6/5,7/6,8/7}
  \path (a.corner \y) edge (a.corner \z);
\end{tikzpicture}
\longleftarrow 
\begin{tikzpicture}[baseline={([yshift=-0.6ex] current bounding box.center)}]
\node[draw=none,minimum size=2cm,regular polygon,regular polygon sides=16] (a) {};

\foreach \x in {1,2,...,16}
  \fill (a.corner \x) circle[radius=0.7pt];
  
\foreach \y\z in {1/12,2/13,3/14,4/15,5/16,6/1,7/2,8/3,9/4,10/5,11/6,12/7,13/8,14/9,15/10,16/11}
  \path (a.corner \y) edge (a.corner \z);
  
\foreach \y\z in {1/8,2/9,3/10,4/11,5/12,6/13,7/14,8/15,9/16,10/1,11/2,12/3,13/4,14/5,15/6, 16/7}
  \path (a.corner \y) edge  (a.corner \z);
\end{tikzpicture}
\longleftarrow \ldots
\end{equation*}
The power series $Q$ starts as follows 
$$Q(T) = (0.1010\ldots) T + (0.1000\ldots) T^{2} + (1.0101\ldots) T^{3} + (0.0000\ldots) T^{4} + \ldots \in \mathbb{Z}_{2}\llbracket T \rrbracket,$$
so we should have $\mu = 0$ and $\lambda = 5$.  We calculate
$$\kappa_{0} = 1, \kappa_{1} = 2^{2}, \kappa_{2} = 2^{5}, \kappa_{3} = 2^{12}, \kappa_{4} = 2^{17}\cdot 17^{2}, \kappa_{5} = 2^{22} \cdot 17^{2} \cdot 1217^{2}, \ldots $$
We have
$${\rm ord}_{2}(\kappa_{n}) =  5n -3,$$
for all $n \ge 3$.

\item Let $a_{1}=1/2, a_{2}=1/5, a_{3} =1/7$ and $\ell = 3$.  Then, we get:
\begin{equation*}
\begin{tikzpicture}[baseline={([yshift=-1.7ex] current bounding box.center)}]
\node[draw=none,minimum size=3cm,regular polygon,regular polygon sides=1] (a) {};
\foreach \x in {1}
  \fill (a.corner \x) circle[radius=0.7pt];
\draw (a.corner 1) to [in=50,out=130,loop] (a.corner 1);
\draw (a.corner 1) to [in=50,out=130,distance = 0.8cm,loop] (a.corner 1);
\draw (a.corner 1) to [in=50,out=130,distance = 0.5cm,loop] (a.corner 1);
\end{tikzpicture}
\longleftarrow \, \, \,
\begin{tikzpicture}[baseline={([yshift=-0.6ex] current bounding box.center)}]
\node[draw=none,minimum size=2cm,regular polygon,regular polygon sides=3] (a) {};

\foreach \x in {1,2,3}
  \fill (a.corner \x) circle[radius=0.7pt];

\path (a.corner 1) edge [bend left=20] (a.corner 2);
\path (a.corner 1) edge [bend right=20] (a.corner 2);
\path (a.corner 2) edge [bend left=20] (a.corner 3);
\path (a.corner 2) edge [bend right=20] (a.corner 3);
\path (a.corner 3) edge [bend left=20] (a.corner 1);
\path (a.corner 3) edge [bend right=20] (a.corner 1);

\path (a.corner 1) edge  (a.corner 2);
\path (a.corner 2) edge  (a.corner 3);
\path (a.corner 3) edge  (a.corner 1);

\end{tikzpicture}
\longleftarrow \, \,
\begin{tikzpicture}[baseline={([yshift=-0.6ex] current bounding box.center)}]
\node[draw=none,minimum size=2cm,regular polygon,regular polygon sides=9] (a) {};

\foreach \x in {1,2,...,9}
  \fill (a.corner \x) circle[radius=0.7pt];
  
\foreach \y\z in {1/3,2/4,3/5,4/6,5/7,6/8,7/9,8/1,9/2}
  \path (a.corner \y) edge (a.corner \z);

  \path (a.corner 1) edge [bend left=20] (a.corner 5);
  \path (a.corner 1) edge [bend right=20] (a.corner 5);

  \path (a.corner 1) edge [bend left=20] (a.corner 6);
  \path (a.corner 1) edge [bend right=20] (a.corner 6);

  \path (a.corner 2) edge [bend left=20] (a.corner 6);
  \path (a.corner 2) edge [bend right=20] (a.corner 6);

  \path (a.corner 2) edge [bend left=20] (a.corner 7);
  \path (a.corner 2) edge [bend right=20] (a.corner 7);

  \path (a.corner 3) edge [bend left=20] (a.corner 7);
  \path (a.corner 3) edge [bend right=20] (a.corner 7);

  \path (a.corner 3) edge [bend left=20] (a.corner 8);
  \path (a.corner 3) edge [bend right=20] (a.corner 8);

  \path (a.corner 4) edge [bend left=20] (a.corner 8);
  \path (a.corner 4) edge [bend right=20] (a.corner 8);

  \path (a.corner 4) edge [bend left=20] (a.corner 9);
  \path (a.corner 4) edge [bend right=20] (a.corner 9);

  \path (a.corner 5) edge [bend left=20] (a.corner 1);
  \path (a.corner 5) edge [bend right=20] (a.corner 1);

  \path (a.corner 5) edge [bend left=20] (a.corner 9);
  \path (a.corner 5) edge [bend right=20] (a.corner 9);

\end{tikzpicture}
\longleftarrow
\begin{tikzpicture}[baseline={([yshift=-0.6ex] current bounding box.center)}]
\node[draw=none,minimum size=2cm,regular polygon,regular polygon sides=27] (a) {};

\foreach \x in {1,2,...,27}
  \fill (a.corner \x) circle[radius=0.7pt];
  
\foreach \y\z in {1/4,2/5,3/6,4/7,5/8,6/9,7/10,8/11,9/12,10/13,11/14,12/15,13/16,14/17,15/18,16/19,17/20,18/21,19/22,20/23,21/24,22/25,23/26,24/27,25/1,26/2,27/3}
  \path (a.corner \y) edge (a.corner \z);
  
\foreach \y\z in {1/12,2/13,3/14,4/15,5/16,6/17,7/18,8/19,9/20,10/21,11/22,12/23,13/24,14/25,15/26,16/27,17/1,18/2,19/3,20/4,21/5,22/6,23/7,24/8,25/9,26/10,27/11}
  \path (a.corner \y) edge (a.corner \z); 
  
\foreach \y\z in {1/15,2/16,3/17,4/18,5/19,6/20,7/21,8/22,9/23,10/24,11/25,12/26,13/27,14/1,15/2,16/3,17/4,18/5,19/6,20/7,21/8,22/9,23/10,24/11,25/12,26/13,27/14}
  \path (a.corner \y) edge (a.corner \z);

\end{tikzpicture}
\, \, \longleftarrow \ldots
\end{equation*}
The power series $Q$ starts as follows 
$$Q(T) = (0.0111\ldots) T + (1.1020\ldots) T^{2} + (1.0200\ldots) T^{3} + \ldots \in \mathbb{Z}_{3}\llbracket T \rrbracket,$$
so we should have $\mu = 0$ and $\lambda = 3$.  We calculate
$$\kappa_{0} = 1, \kappa_{1} = 3^{3}, \kappa_{2} = 3^{6} \cdot 19^{2}  , \kappa_{3} = 3^{9} \cdot 19^{2} \cdot 703459^{2} , \ldots $$
and we have
$${\rm ord}_{3}(\kappa_{n}) =  3n,$$
for all $n \ge 0$.

\item Let $\ell = 13$ and $a_{1} = \sqrt{3} = 4.868\ldots, a_{2} = \sqrt{10} = 6.264\ldots \in \mathbb{Z}_{13}$.  Then, we get:
\begin{equation*}
\begin{tikzpicture}[baseline={([yshift=-1.7ex] current bounding box.center)}]
\node[draw=none,minimum size=2cm,regular polygon,regular polygon sides=1] (a) {};
\foreach \x in {1}
  \fill (a.corner \x) circle[radius=0.7pt];
\draw (a.corner 1) to [in=50,out=130,loop] (a.corner 1);
\draw (a.corner 1) to [in=50,out=130,distance = 0.5cm,loop] (a.corner 1);
\end{tikzpicture}
\longleftarrow \, \, \,
\begin{tikzpicture}[baseline={([yshift=-0.6ex] current bounding box.center)}]
\node[draw=none,minimum size=2cm,regular polygon,regular polygon sides=13] (a) {};

\foreach \x in {1,2,...,8}
  \fill (a.corner \x) circle[radius=0.7pt];
  
\foreach \y\z in {1/5,2/6,3/7,4/8,5/9,6/10,7/11,8/12,9/13,10/1,11/2,12/3,13/4}
  \path (a.corner \y) edge (a.corner \z);
  
\foreach \y\z in {1/7,2/8,3/9,4/10,5/11,6/12,7/13,8/1,9/2,10/3,11/4,12/5,13/6}
  \path (a.corner \y) edge (a.corner \z);
\end{tikzpicture}
\longleftarrow 
\ldots
\end{equation*}
The power series $Q$ starts as follows 
$$Q(T) = 13 T - 8 T^{2} + (3.4961\ldots) T^{3} + \ldots \in \mathbb{Z}_{13}\llbracket T \rrbracket,$$
so we should have $\mu = 0$ and $\lambda = 3$.  We calculate
$${\rm ord}_{13}(\kappa_{0}) = 0, {\rm ord}_{13}(\kappa_{1}) = 3, {\rm ord}_{13}(\kappa_{2}) = 6, {\rm ord}_{13}(\kappa_{3}) = 9, \ldots $$
We have
$${\rm ord}_{13}(\kappa_{n}) =  3n,$$
for all $n \ge 0$.

\end{enumerate}

\bibliographystyle{plain}
\bibliography{main}

\end{document}